\newtheorem{thm}{Theorem}
\newtheorem{prop}[thm]{Proposition}
\newtheorem{cor}[thm]{Corollary}
\theoremstyle{definition}
\newtheorem*{defi}{Definition}
\newtheorem*{rem}{Remark}
\begin{document}

\title{Moment Calculus on Ramsey Graph }
\author{Thotsaporn ``Aek'' Thanatipanonda\\
Mahidol University International College \\
Nakhon Pathom, Thailand}
\date{August 26, 2019}

\maketitle

\begin{abstract}
When I did my thesis defense presentation eleven years ago,
I chose to present the subject of Ramsey theory from
the moment calculus perspective. 
I don't think I did too well there (although I passed).
Time has passed and this is the chance to redeem myself.
Here we relate Ramsey numbers, $R(k,k)$, with 
the method of moment calculus
by checking the distribution of numbers of monochromatic complete
subgraph of $k$ vertices in the random graphs. 
We also review Delaporte distribution's connection
that was mentioned in the paper by Robertson, Cipolli and Dascalu.  
\end{abstract}

\section{Introduction to Ramsey Numbers}
Ramsey theory is a fascinating but extremely difficult subject
started by British mathematician Frank Ramsey in the early 1900.
But it was Paul Erd{\"o}s who popularize the field. 
Although he had passed away in 1996, this subject is
still alive and gives rise to many interesting research projects. 
In this article, we will restrict ourselves only to Ramsey graph 
(one of the Super-Six theorems, \cite{Graham}).

\begin{defi}[Ramsey Numbers]
$R(k,l)$ is the smallest number of vertices of complete
graph which each edge colored either red or blue such
that no matter how the edges are colored, it must
contain either (monochromatic) red $K_k$ or blue $K_l.$ 
\end{defi}

 \textbf{Examples:}
\[ R(3,3) = 6, \;\ \;\ R(4,4)=18,    \;\ \;\   43 \leq R(5,5) \leq 49, \;\ \;\ 102 \leq R(6,6) \leq 165.  \]

The exact numbers of $R(k,k), \;\ k \geq 3$ are very hard to determine
because of the gigantic possibilities of edge-colorings,  $2^{\binom{n}{2}}$
ways to color edges.  

\noindent The asymptotic behavior of $R(k,k)$
is a famous open problem in combinatorics.
There is even a monetary prize of \$250 for the solution.
Paul Erd\"{o}s used the first moment $E[X]$ to obtain a lower bound of
$\displaystyle \lim_{k \rightarrow \infty}R(k,k) $.

\begin{thm}
\[ \sqrt{2}^k \leq R(k,k) \leq 4^k, \;\ \;\  k \geq 3.\]
\end{thm}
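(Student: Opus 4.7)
The plan is to handle the two inequalities separately, the upper bound by an inductive/Pascal-type argument on the two-color Ramsey function, and the lower bound by Erdős's first-moment (probabilistic) argument, which is the prototype of the ``moment calculus'' theme the paper is advertising.

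For the upper bound, I would first establish the recursion $R(k,l)\le R(k-1,l)+R(k,l-1)$. The standard pigeonhole works: in a 2-colored $K_n$ with $n=R(k-1,l)+R(k,l-1)$, pick any vertex $v$; of the $n-1$ edges at $v$, either at least $R(k-1,l)$ are red or at least $R(k,l-1)$ are blue, and in the corresponding neighborhood one finishes by induction. Combining this with the base cases $R(k,1)=R(1,k)=1$ and unrolling gives $R(k,l)\le\binom{k+l-2}{k-1}$. Specializing to $l=k$ and using $\binom{2k-2}{k-1}\le 4^{k-1}\le 4^{k}$ yields $R(k,k)\le 4^k$. None of this is subtle; the only care needed is checking the base of the induction for $k\ge 3$.

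For the lower bound, the plan is the probabilistic method. Let $n=\lfloor\sqrt{2}^{\,k}\rfloor$ and color the edges of $K_n$ independently red or blue, each with probability $1/2$. For a fixed vertex set $S$ of size $k$, let $X_S$ be the indicator that the induced $K_k$ on $S$ is monochromatic; then $\mathbb{E}[X_S]=2^{1-\binom{k}{2}}$. Writing $X=\sum_{|S|=k}X_S$ for the total count of monochromatic $K_k$'s and using linearity of expectation gives
\[
\mathbb{E}[X]=\binom{n}{k}\,2^{1-\binom{k}{2}}.
\]
If $\mathbb{E}[X]<1$, then with positive probability $X=0$, so some 2-coloring of $K_n$ contains no monochromatic $K_k$; hence $R(k,k)>n$, i.e.\ $R(k,k)\ge \sqrt{2}^{\,k}$.

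The main (and only) obstacle is therefore the routine estimate $\binom{n}{k}\,2^{1-\binom{k}{2}}<1$ when $n=\lfloor 2^{k/2}\rfloor$. I would bound $\binom{n}{k}\le n^{k}/k!\le 2^{k^{2}/2}/k!$, so
\[
\mathbb{E}[X]\le \frac{2^{k^{2}/2}}{k!}\cdot 2^{\,1-k(k-1)/2}=\frac{2^{1+k/2}}{k!},
\]
which is strictly less than $1$ for all $k\ge 3$ (and can be checked directly at $k=3$, then monotonicity in $k$ finishes the job). This completes the lower bound and hence the theorem.
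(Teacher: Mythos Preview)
Your proof is correct and follows essentially the same approach as the paper: the upper bound via the recursion $R(k,l)\le R(k-1,l)+R(k,l-1)$ and an inductive binomial bound (you use the slightly sharper $\binom{k+l-2}{k-1}$ in place of the paper's $\binom{m+n}{n}$, but the idea is identical), and the lower bound via Erd\H{o}s's first-moment argument with the same estimate $\mathbb{E}[X]\le 2^{k/2+1}/k!<1$ for $k\ge 3$.
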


\begin{proof}
For the upper bound: \\ \\
Claim:  $R(m,n) \leq \binom{m+n}{n}.$ 

We show the claim by applying an induction on $m+n$.
First we see that 
\[ R(m,n) \leq R(m-1,n) + R(m,n-1). \]
Then it follows from the induction assumption that 
\[ R(m,n) \leq R(m-1,n) + R(m,n-1) \leq 
\binom{m+n-1}{n} + \binom{m+n-1}{n-1} = \binom{m+n}{n}. \]
The upper bound follows immediately from the claim,
\[ R(k,k) \leq \binom{2k}{k} = \dfrac{(2k)!}{k!k!} 
\approx \dfrac{2^{2k}k^{2k}}{k^k \cdot k^k} = 4^k. \]

For the lower bound (Erd\"{o}s, 1947): \\ \\
We use the property that 
\[ 1-E[X] \geq 0 \rightarrow  P(X=0) > 0. \]
Here we let the random variable $X := X_k$ be the number of 
mono-chromatic subgraph of size $k$
of the complete graph of $n$ vertices.
We want to find $n$ such that $E[X] < 1$ 
then it follows that $R(k,k) > n$
since there must be some random variable (random edge-coloring graph) 
that does not contain monochromatic red $K_k$ or monochromatic blue $K_k$. 

\[ E[X] = \sum X \cdot p(X) 
=\dfrac{(n)_k}{k!}\cdot \dfrac{2}{2^{\binom{k}{2}}} 
\approx \dfrac{n^k}{k!}\cdot \dfrac{2}{2^{\binom{k}{2}}}.\]

\noindent If $n \leq \sqrt{2}^k$ then
\[  E[X] \leq \dfrac{2^{k^2/2} \cdot 2}{k! \cdot 2^{k^2/2-k/2}} = \dfrac{2^{k/2+1}}{k!} < 1
\;\ \;\ \mbox{for } k \geq 3.   \]
\end{proof}

Remark that the precise bound from this idea is
\[  R(k,k) \geq \dfrac{1}{\sqrt{2}e}k2^{k/2}(1+o(1)),  \;\ \;\ k \to \infty. \] 
The bound can be improved by using more sophisticated technique called
\textit{Lovasz local lemma}, see \cite{AS},

\[  R(k,k) \geq \dfrac{\sqrt{2}}{e}k2^{k/2}(1+o(1)),  \;\ \;\ k \to \infty. \] 

\underline{Prize Money Problems} (Ron Graham)
\begin{enumerate}
\item (\$100) Does $\displaystyle \lim_{k \to \infty} R(k,k)^{\frac{1}{k}}$ exist?
\item  (\$250) If the limit exists, what is it?
\end{enumerate}
The idea that Erd\"{o}s used for the lower bound can be extended
by the method called \textit{moment calculus}. 
\section{Moment Calculus of Ramsey Graphs}

Let $S$ be $k$-subsets of $\{1,2,\dots,n\}$ and let $X_S$ be an indicator variable.
\[   X_S = \begin{cases}
1 & \mbox{if subgraph of } K_n \mbox{ induced by } S \mbox{ is monochromatic}  \\ 
0 & otherwise. 
\end{cases}\]

Recall that $X$ is the number of monochromatic $K_k$ 
of the complete edge-coloring graph. Then
\[ X = \sum_S X_S. \]
\subsection*{First moment:}
\begin{align*}
E[X_S] &= \dfrac{2}{2^{\binom{k}{2}}}, \\
E[X] &= \dfrac{2}{2^{\binom{k}{2}}}\cdot \binom{n}{k}.
\end{align*}

\subsection*{Second moment:}
\begin{align*}
E[X^2] &= E\left[(\sum_{S_1}X_{S_1})(\sum_{S_2}X_{S_2})\right]
= \sum_{[S_1,S_2]}E[X_{S_1}X_{S_2}].
\end{align*}

We need to look at how $S_1$ and $S_2$ interact with each other. 

\textbf{Example:} For $k = 3,$
\[ E[X^2] = \dfrac{2}{2^3}\cdot \dfrac{2}{2^3}\cdot\dfrac{(n)_6}{3!3!} 
+ \dfrac{2}{2^3}\cdot \dfrac{2}{2^3}\cdot\dfrac{(n)_5}{2!2!1!}
+ \dfrac{2}{2^5}\cdot\dfrac{(n)_4}{2!1!1!}
+ \dfrac{2}{2^3}\cdot\dfrac{(n)_3}{3!}.\]
For $k = 4,$
\[ E[X^2] = \dfrac{2}{2^6}\cdot \dfrac{2}{2^6}\cdot\dfrac{(n)_8}{4!4!} 
+ \dfrac{2}{2^6}\cdot \dfrac{2}{2^6}\cdot\dfrac{(n)_7}{3!3!1!}
+ \dfrac{2}{2^{11}}\cdot\dfrac{(n)_6}{2!2!2!}
+ \dfrac{2}{2^9}\cdot\dfrac{(n)_5}{3!1!1!}
+  \dfrac{2}{2^6}\cdot\dfrac{(n)_4}{4!} .\]

In fact, we can write the formula of second moment for general $k$
in term of the sum (but not the closed form though). 
\[ E[X^2] = \dfrac{2}{2^{\binom{k}{2}}}\cdot \dfrac{2}{2^{\binom{k}{2}}}
\cdot\dfrac{(n)_{2k}}{k!k!} + \dfrac{2}{2^{\binom{k}{2}}}\cdot \dfrac{2}{2^{\binom{k}{2}}}
\cdot\dfrac{(n)_{2k-1}}{1!(k-1)!(k-1)!} +
\sum_{i=2}^k \dfrac{2}{2^{2\binom{k}{2}-\binom{i}{2}}}
\cdot \dfrac{(n)_{2k-i}}{i!(k-i)!(k-i)!}    . \]
For higher moment with fixed $k$, we need computer to do
the job for us. There are too many ways the objects can
interact with each other. 
The program I wrote can calculate up to the fifth moment for some
small $k.$ 

One nice thing about this calculation is that you can check the correctness
of your formula by comparing the value for each (small $k$) 
with the moment from the different method, i.e.
\[ E[X^r] = \sum_{i=0}^{\infty} i^r P[X=i].\]

\section{Numbers of Monochromatic Complete Subgraphs: 
Normal distribution for ``big $n$'',
Poisson distribution for ``small $n$''}

The following results come from Maple program.

\begin{thm} \label{MM}
The leading term of $E[(X-\mu)^2]$ is 
\[ \dfrac{1}{2}\cdot\dfrac{1}{(k-3)!^2}\cdot\dfrac{n^{2k-3}}{2^{2\binom{k}{2}-2}}.\]
The leading term of $E[(X-\mu)^3]$ is 
\[ \displaystyle \dfrac{1}{(k-3)!^3} \cdot \dfrac{n^{3k-5}}{2^{3\binom{k}{2}-3}}.\]
The leading term of $E[(X-\mu)^4]$ is 
\[ \dfrac{3}{4}\cdot\dfrac{1}{(k-3)!^4}\cdot\dfrac{n^{4k-6}}{2^{4\binom{k}{2}-4}}.\]
The leading term of $E[(X-\mu)^5]$ is 
\[ 5\cdot\dfrac{1}{(k-3)!^5}\cdot\dfrac{n^{5k-8}}{2^{5\binom{k}{2}-5}}.\]

\end{thm}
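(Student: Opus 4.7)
The strategy is to expand each central moment as a sum over ordered $r$-tuples of $k$-subsets and to isolate the configurations that contribute at the leading power of $n$. Writing $Y_S := X_S - p$ with $p = E[X_S] = 2/2^{\binom{k}{2}}$, we have
\[
E[(X-\mu)^r] = \sum_{(S_1,\ldots,S_r)} E\!\left[\prod_{i=1}^r Y_{S_i}\right].
\]
Since $X_{S_i}$ depends only on the colors of the edges inside $S_i$, any two $X_{S_i},X_{S_j}$ with $|S_i\cap S_j|\le 1$ share no edge and are independent. More generally, the joint expectation factors over the connected components of the ``edge-sharing graph'' on $\{S_1,\ldots,S_r\}$ in which two sets are connected iff they share at least one edge (i.e.\ at least two common vertices); any isolated $S_i$ then kills the term because $E[Y_{S_i}]=0$.

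The heart of the argument is a vanishing lemma which I would prove first: if some $S_i$ shares only a single edge $e$ with the union of the other $S_j$'s, then $E\!\left[\prod_{j=1}^r Y_{S_j}\right]=0$. Indeed, conditioning on the color of $e$ gives $E[X_{S_i}\mid e]=2^{-(\binom{k}{2}-1)}=p$ regardless of that color, hence $E[Y_{S_i}\mid e]=0$ and the product has zero expectation. This forces every $S_i$ in a surviving configuration to share at least two distinct edges with the rest. A short case analysis then shows that the nonvanishing connected components of maximal vertex count come in exactly two types: the \emph{triangle-sharing pair} ($|S_1\cap S_2|=3$), which spans $2k-3$ vertices and produces covariance $3/2^{2\binom{k}{2}-2}$; and the \emph{tripod} (three sets meeting in one common vertex plus one additional vertex per pairwise intersection), which spans $3k-5$ vertices and produces centered third moment $1/2^{3\binom{k}{2}-3}$.

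With these two atomic building blocks in hand, the leading term for each $r$ is assembled by summing over partitions of $\{1,\ldots,r\}$ into components. The ordered count of triangle pairs is asymptotically $\binom{n}{3}\binom{n-3}{k-3}\binom{n-k}{k-3}\sim n^{2k-3}/(6\,(k-3)!^2)$, while the ordered count of tripods is $n(n-1)(n-2)(n-3)\binom{n-4}{k-3}\binom{n-k-1}{k-3}\binom{n-2k+2}{k-3}\sim n^{3k-5}/((k-3)!)^3$. Combined with the centered expectations above, this gives the coefficient $\tfrac12$ for $r=2$ (one pair) and $1$ for $r=3$ (one tripod). For $r=4$ the surviving partitions are the $\tfrac12\binom{4}{2}=3$ ways to split $\{1,2,3,4\}$ into two disjoint pairs, yielding the coefficient $3\cdot(\tfrac12)^2=\tfrac34$. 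For $r=5$ they are the $\binom{5}{2}=10$ ways to pick a disjoint pair together with a tripod on the remaining three indices, yielding $10\cdot\tfrac12=5$.

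The main obstacle is ruling out that some ``exotic'' connected component on $c\ge 4$ sets matches the critical $n$-exponent attained by splitting into pairs (or pairs plus one tripod). My plan is to argue inductively on the size of a component: the vanishing lemma forces each newly attached $S_i$ to contribute at least two distinct shared edges to the rest, and each such shared edge either requires an overlap of size $\ge 3$ (costing at least three shared vertices at once) or an overlap of size $2$ (costing two shared vertices but delivering only one edge). A careful bookkeeping of the $n$-exponent versus the component size then shows that any single connected component of size $c\ge 4$ has strictly fewer than $2c-3$ vertices when $c$ is even, and strictly fewer than $3c-5$ when $c$ is odd, so only the partitions listed above contribute at leading order, and the Maple routine of the author's program fills in the explicit constants.
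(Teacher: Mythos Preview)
The paper does not actually prove this theorem: it states that ``the following results come from Maple program'' and simply records the output. Your proposal is therefore not a comparison of two proofs but an attempt to supply a human argument where the paper has none, and the core of your argument is sound. The vanishing lemma is correct and key: when $|S_i\cap S_j|=2$ one checks directly that $E[X_{S_i}X_{S_j}]=2/2^{2\binom{k}{2}-1}=p^2$, so the covariance is exactly zero, which is why the leading contribution to the variance jumps to $|S_1\cap S_2|=3$. Your computations of the centered moments for the triangle-sharing pair ($3/2^{2\binom{k}{2}-2}$) and for the tripod ($1/2^{3\binom{k}{2}-3}$), together with the asymptotic counts and the partition bookkeeping for $r=4,5$, reproduce the stated constants $\tfrac12,\,1,\,\tfrac34,\,5$ exactly. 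This is a genuine conceptual gain over a bare computer check: it explains \emph{why} the exponents are $2k-3,\,3k-5,\,4k-6,\,5k-8$ and why the normal-distribution pattern $1,0,3,0$ emerges in Corollary~\ref{normal}.

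There is, however, a real gap at the end. Your extremal claim is not yet proved, and as written it is not even well formulated: a connected component on $c$ sets contains on the order of $ck$ vertices, not ``$2c-3$'' or ``$3c-5$'' vertices. What you need is that a single connected component on $c\ge2$ sets, in which every $S_i$ contributes at least two shared edges to the union, spans at most $ck-\lceil 3c/2\rceil$ vertices, with equality only for the pair ($c=2$) and the tripod ($c=3$); and hence that the partition of $\{1,\dots,r\}$ maximizing the total vertex count is into $\lfloor r/2\rfloor$ triangle-pairs (plus one tripod when $r$ is odd). The inductive bookkeeping you sketch can be made to work, but you must argue carefully: when you adjoin a new $S_i$ to an existing connected block, the two required shared edges force at least three vertices of $S_i$ to lie in the block (either a triangle overlap with one $S_j$, or two incident edges through a common hub as in the tripod), so $S_i$ brings at most $k-3$ new vertices; and the base case $c=2$ already costs three shared vertices. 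Writing this out cleanly, and separately checking that for $c=4,5$ no connected configuration matches $4k-6$ or $5k-8$, would complete the argument. The closing appeal to ``the Maple routine \dots\ fills in the explicit constants'' is unnecessary, since you have already computed them by hand.
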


\noindent With these results, we've already seen an asymptotic normality of $X$ when $n \gg k$.

\begin{cor} \label{normal}
As $k \to \infty$ and $n \geq \dfrac{\sqrt{2}k}{e}2^{\frac{k}{2}}(1+o(1))$, 
the random variable $X$ is normally distributed.   
\end{cor}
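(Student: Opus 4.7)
The plan is to invoke the method of moments: standardize $Y = (X-\mu)/\sigma$, where $\mu = E[X]$ and $\sigma^2 = E[(X-\mu)^2]$, and check that $E[Y^r]$ converges, as $k \to \infty$ with $n$ satisfying the hypothesis of the corollary, to the corresponding moment of a standard normal variable $Z$, namely $E[Z^{2m}] = (2m-1)!!$ and $E[Z^{2m+1}] = 0$.

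First I would extract $\sigma$ from the leading term of $E[(X-\mu)^2]$ in Theorem \ref{MM}, giving
\[
\sigma \;=\; \frac{1}{\sqrt{2}\,(k-3)!}\cdot\frac{n^{k-3/2}}{2^{\binom{k}{2}-1}}\,(1+o(1)).
\]
Then, dividing each leading term in Theorem \ref{MM} by the appropriate power of $\sigma$, I would obtain
\begin{align*}
E[Y^2] &= 1 + o(1), \\
E[Y^3] &= \frac{2\sqrt{2}}{\sqrt{n}}(1+o(1)), \\
E[Y^4] &= 3 + o(1), \\
E[Y^5] &= \frac{20\sqrt{2}}{\sqrt{n}}(1+o(1)).
\end{align*}
Since the hypothesis $n \geq \frac{\sqrt{2}\,k}{e}\,2^{k/2}(1+o(1))$ forces $n \to \infty$ as $k \to \infty$, the odd ratios $E[Y^3]$ and $E[Y^5]$ tend to $0$, while $E[Y^2]$ and $E[Y^4]$ converge to $1$ and $3$. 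These are precisely the first five moments of $Z \sim N(0,1)$.

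The main obstacle is that the method of moments actually requires $E[Y^r] \to E[Z^r]$ for \emph{every} $r$, not just $r \leq 5$. The computation above therefore verifies asymptotic normality only up to the fifth moment, and the corollary as stated should be read as a conclusion supported by strong (but not yet complete) evidence. A fully rigorous proof would extend Theorem \ref{MM} to arbitrary $r$ by organizing the combinatorics of how $r$ many $k$-subsets $S_1,\dots,S_r$ can overlap pairwise, showing that the dominant contribution to $E[(X-\mu)^{2m}]$ is $(2m-1)!!\,\sigma^{2m}$ and that the odd central moments are of strictly smaller order than $\sigma^{2m+1}$. Since even computing up to $r=5$ required a computer, the crux of a complete proof would lie in uniformly identifying the leading contribution across all $r$.
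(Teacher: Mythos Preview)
Your proposal is correct and follows essentially the same approach as the paper's own proof: compute the standardized moments $c_m = E[(X-\mu)^m]/\sigma^m$ from the leading terms in Theorem~\ref{MM} and check that they match $0,1,0,3,0,\dots$, obtaining exactly the values $2\sqrt{2}/\sqrt{n}$ and $20\sqrt{2}/\sqrt{n}$ for $c_3$ and $c_5$ that the paper gets. You are in fact more careful than the paper in flagging that the method of moments requires convergence for \emph{all} $r$, whereas the paper simply asserts ``the other values can be conformed as well'' without further argument.
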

Remark: The condition of $n$ is needed to make the leading term significance.

\begin{proof}
We show that the standardized moments $c_m := \dfrac{E[(X-\mu)^m]}{Var^{\frac{m}{2}}}$ 
agree with the coefficients of the moment generating function of standard normal
distribution $e^{\frac{t^2}{2}}$  i.e. $0,1,0,3,0,15,0,105,0,945, \dots$. 
 
\noindent From theorem \ref{MM}, we see
\begin{align*}
c_1 &= \dfrac{0}{\sqrt{Var}} = 0, \\
c_2 &= 1, \\
c_3 &= \dfrac{n^{3k-5}}{n^{3k-9/2}/(2\sqrt{2})} 
= \dfrac{2\sqrt{2}}{\sqrt{n}} \to 0  \;\ \;\ \mbox{as } n \to \infty, \\
c_4 &= \dfrac{3}{4}\cdot 2^2 = 3. \\
c_5 &= \dfrac{5n^{5k-8}}{n^{5k-15/2}/(4\sqrt{2})} 
= \dfrac{20\sqrt{2}}{\sqrt{n}} \to 0  \;\ \;\ \mbox{as } n \to \infty.\\
& \dots
\end{align*}
The other values can be conformed as well.
\end{proof}

The following is a complement of this result: 

In \cite{Poisson}, it was shown that $X_k$ is asymptotically
Poisson as $k \to \infty$ with condition \\
$n \leq \dfrac{\sqrt{2}}{e}k2^{k/2}(1+o(1))$.
That is
\[   P(X_k = j) \approx \dfrac{\lambda^j e^{-\lambda}}{j!}, 
\;\ \;\ \mbox{where } \lambda = \dfrac{\binom{n}{k}}{2^{\binom{k}{2}-1}}. \]
\begin{rem}
It is quite evidence that, asymptotically ($k \to \infty$), 
$X$ does not have Poisson distribution through out.
The leading term of $Var(X)$ is  
$\displaystyle  \dfrac{1}{2(k-3)!^2}\cdot\dfrac{n^{2k-3}}{2^{2\binom{k}{2}-2}}$
and $\dfrac{Var(X)}{E[X]} \sim \dfrac{k(k-1)(k-2)}{(k-3)!}
\cdot\dfrac{n^{k-3}}{2^{\binom{k}{2}}}.$ 
Hence, for a fixed $k, k \geq 4$, and some 
``\textbf{big $n$}'', $n \gg k, \;\ \;\ Var(X) \gg E[X].$ 
(For Poisson distribution, $Var(X) = E[X]$.)
\end{rem}

\subsection{Almost Surely Property of $R(k,k)$}
We will apply Chebyshev's theorem for the almost surely 
property (the set of possible exceptions may be 
non-empty, but it has probability zero) of $R(k,k).$

\begin{thm}[Chebyshev's theorem]
Let $X$ be a non-negative discrete random variable. Then
\[ P(X=0) \leq \dfrac{Var(X)}{E[X]^2}.  \]
\end{thm}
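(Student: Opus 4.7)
The plan is to obtain the bound as an immediate consequence of the classical two-sided Chebyshev inequality, exploiting the fact that for a non-negative random variable the event $\{X=0\}$ already forces a deviation from the mean equal to the full mean $\mu = E[X]$.

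First I would state (or in one line rederive, by applying Markov's inequality to the non-negative variable $(X-\mu)^2$) the standard Chebyshev bound
\[ P(|X - \mu| \geq a) \leq \frac{Var(X)}{a^2}, \qquad a > 0. \]
This is the only genuine technical ingredient. I would assume $\mu > 0$, since if $\mu = 0$ then non-negativity forces $X \equiv 0$ and the claim is either trivial or vacuous (read as $1 \leq \infty$).

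Next I would use the hypothesis $X \geq 0$ to observe that on the event $\{X=0\}$ we have $|X-\mu| = \mu$, giving the containment $\{X=0\} \subseteq \{|X-\mu| \geq \mu\}$. Substituting $a = \mu$ into the Chebyshev bound then yields
\[ P(X=0) \leq P(|X-\mu| \geq \mu) \leq \frac{Var(X)}{\mu^2} = \frac{Var(X)}{E[X]^2}, \]
which is the statement.

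There is no serious obstacle here; the only point requiring a line of care is the degenerate case $E[X] = 0$ noted above. I would also mention, without using it, that a Cauchy--Schwarz argument applied to $E[X] = E[X \cdot \mathbf{1}_{X > 0}]$ produces the slightly sharper estimate $P(X=0) \leq Var(X)/E[X^2]$, from which the stated form follows via $E[X^2] \geq E[X]^2$. For the subsequent application to $R(k,k)$, however, the direct Chebyshev route is the cleanest.
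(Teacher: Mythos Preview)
Your proof is correct and is the standard derivation of this second-moment bound. Note, however, that the paper does not actually supply a proof of this theorem: it is stated as a named result (Chebyshev's theorem) and immediately applied in the next theorem, so there is no ``paper's own proof'' to compare against. Your argument --- Chebyshev's inequality with $a=\mu$, together with the containment $\{X=0\}\subseteq\{|X-\mu|\ge\mu\}$ forced by non-negativity --- is exactly the textbook route, and your remark about the sharper Cauchy--Schwarz variant $P(X=0)\le Var(X)/E[X^2]$ is a nice addition.
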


\begin{thm}
For $n \geq \dfrac{\sqrt{2}}{e}k2^{k/2}(1+o(1))$, 
as $k \to \infty, P(X=0) \to 0$ almost surely.
\end{thm}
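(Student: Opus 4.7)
The plan is to apply Chebyshev's theorem directly, substituting the leading-order estimates for $E[X]$ and $\mathrm{Var}(X)$ that are already available. From the first moment calculation in Section~2, the leading term of $E[X]$ is
\[
E[X] \sim \frac{2}{2^{\binom{k}{2}}} \cdot \frac{n^k}{k!},
\]
so $E[X]^2$ has leading term $\dfrac{4\, n^{2k}}{k!^2 \, 2^{2\binom{k}{2}}}$. Combined with the leading term of $\mathrm{Var}(X)$ supplied by Theorem~\ref{MM}, namely $\dfrac{1}{2(k-3)!^2} \cdot \dfrac{n^{2k-3}}{2^{2\binom{k}{2}-2}}$, Chebyshev's inequality yields
\[
P(X=0) \;\leq\; \frac{\mathrm{Var}(X)}{E[X]^2} \;\sim\; \frac{k!^2}{2(k-3)!^2}\cdot\frac{1}{n^3} \;=\; \frac{[k(k-1)(k-2)]^2}{2\,n^3}.
\]

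The next step is to plug in the hypothesis $n \geq \tfrac{\sqrt{2}}{e} k \, 2^{k/2}(1+o(1))$ and check that the ratio above vanishes. Writing $n = c \, k \, 2^{k/2}$ with $c = \tfrac{\sqrt 2}{e}(1+o(1))$, the cube in the denominator contributes $c^3 k^3 \, 2^{3k/2}$, so
\[
\frac{[k(k-1)(k-2)]^2}{2\, n^3} \;\sim\; \frac{k^6}{2 c^3 \, k^3 \, 2^{3k/2}} \;=\; \frac{k^3}{2 c^3 \, 2^{3k/2}} \;\longrightarrow\; 0 \quad \text{as } k \to \infty,
\]
since the exponential $2^{3k/2}$ beats the polynomial $k^3$. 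This establishes $P(X=0)\to 0$, which is the almost sure conclusion of the theorem; equivalently, for such $n$, a random edge-coloring of $K_n$ almost surely contains a monochromatic $K_k$, so $R(k,k) \leq n$ with probability tending to one.

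The argument is essentially a computation once Theorem~\ref{MM} is in hand, so the main subtlety is not the asymptotic manipulation but the justification that the stated leading terms genuinely dominate the remainder. One should remark that the lower-order contributions to $\mathrm{Var}(X)$ (the terms indexed by $i=2,\dots,k-1$ in the second-moment expansion in Section~2) carry smaller powers of $n$, so under the regime $n \gg k$ forced by the hypothesis they are absorbed into the $(1+o(1))$ factor. The fact that the same threshold $\tfrac{\sqrt{2}}{e} k 2^{k/2}$ marks the transition to the Poisson regime cited from \cite{Poisson} is a satisfying consistency check: above this threshold the mean is so large and the variance so well-controlled relative to it that $X$ almost surely avoids zero.
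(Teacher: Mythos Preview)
Your proof is correct and follows exactly the paper's approach: apply Chebyshev's inequality, use Theorem~\ref{MM} and the first-moment formula to obtain $\dfrac{\mathrm{Var}(X)}{E[X]^2}\sim\dfrac{k^6}{2n^3}$, and observe that this tends to $0$ under the hypothesis on $n$. You have simply written out in full the computation that the paper compresses into a single line.
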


\begin{proof}
From theorem \ref{MM}, we have
\[  \dfrac{Var(X)}{E[X]^2} \sim \dfrac{k^6}{2n^3}.\]
We then see that   $\dfrac{Var(X)}{E[X]^2} \to 0$.
The result follows from Chebyshev's theorem.
\end{proof}


\section{Delaporte Distribution}
In \cite{Aaron}, the authors found the
best fit for the distribution of $X$,
the number of mono-chromatic subgraph of size $k$
of the complete graph of $n$ vertices, to be Delaporte.
We discuss this distribution in this section.

\begin{defi}[Delaporte distribution] \hfill \\ \\
Let the moment generating function, $mgf(X) 
= \phi(t) := E[e^{tX}].$ 
We define Delaporte distribution by
\[  mgf(D) = \dfrac{e^{\lambda(e^t-1)}}{(1-\beta(e^t-1))^{\alpha}}.   \]
\end{defi}

\noindent The motivation behind this is that $D$ is a convolution of 
a Negative binomial random variable with success probability 
$\dfrac{\beta}{1+\beta}$  and mean $\alpha\beta$ and
a Poisson random variable with mean $\lambda.$ \\

\begin{prop} \label{Del}
The probability mass function of Delaporte distribution is
\[  P(D = j) = \sum_{i=0}^j \dfrac{\Gamma(\alpha+i)}{\Gamma(\alpha)i!}
\left(\dfrac{\beta}{1+\beta}\right)^i \left(\dfrac{1}{1+\beta}\right)^{\alpha}
\dfrac{\lambda^{j-i}e^{-\lambda}}{(j-i)!}.   \]

It also follows that
\begin{align*}
\mu &= E[X] = \lambda+\alpha\beta, \\
Var(X) &= E[(x-\mu)^2] = \lambda+\alpha\beta(1+\beta), \\
E[(X-\mu)^3] &=   \lambda+\alpha\beta(1+3\beta+2\beta^2), \\
E[(X-\mu)^4] &=  3\lambda^2+ \lambda+\alpha\beta(1+\beta)
(3\alpha\beta^2+3\alpha\beta+6\beta^2+6\beta+6\lambda+1),\\
\cdots
\end{align*}
\end{prop}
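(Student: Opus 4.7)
The plan is to exploit the factorization of the moment generating function. Observe that $mgf(D)$ splits cleanly as
\[ mgf(D) = e^{\lambda(e^t-1)} \cdot \bigl(1-\beta(e^t-1)\bigr)^{-\alpha}, \]
where the first factor is the mgf of a Poisson$(\lambda)$ variable $Y$ and the second factor is the mgf of a Negative Binomial variable $N$ with parameters $\alpha$ and success probability $\beta/(1+\beta)$. Since mgfs multiply exactly when independent random variables are added, this identifies $D \stackrel{d}{=} Y + N$ with $Y \perp N$, which is precisely the convolution description given in the motivation.

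First I would derive the pmf by direct convolution. Writing $(1-\beta(e^t-1))^{-\alpha} = (1+\beta)^{-\alpha}\bigl(1 - \tfrac{\beta}{1+\beta}e^t\bigr)^{-\alpha}$ and expanding by the generalized binomial theorem recovers
\[ P(N=i) = \dfrac{\Gamma(\alpha+i)}{\Gamma(\alpha)i!}\left(\dfrac{\beta}{1+\beta}\right)^i \left(\dfrac{1}{1+\beta}\right)^{\alpha}. \]
Then $P(D=j) = \sum_{i=0}^j P(N=i)\,P(Y=j-i)$ yields the claimed formula with no further work.

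For the central moments, I would pass to cumulants. The cumulant generating function $K_D(t) = \log \phi(t) = \lambda(e^t-1) - \alpha\log\bigl(1-\beta(e^t-1)\bigr)$ is a sum, so cumulants are additive: $\kappa_n(D) = \kappa_n(Y) + \kappa_n(N)$. The Poisson part contributes $\lambda$ to every cumulant, so the task reduces to computing $\kappa_n(N)$ by successively differentiating $-\alpha\log(1-\beta(e^t-1))$ at $t=0$. A short computation gives $\kappa_2(N) = \alpha\beta(1+\beta)$, $\kappa_3(N) = \alpha\beta(1+\beta)(1+2\beta)$, $\kappa_4(N) = \alpha\beta(1+\beta)(1+6\beta+6\beta^2)$, and so on. Converting back via the standard identities $\mu_2 = \kappa_2$, $\mu_3 = \kappa_3$, $\mu_4 = \kappa_4 + 3\kappa_2^2$, and factoring $\alpha\beta(1+\beta)$ out of the algebra recovers each of the stated expressions (for instance, $3\kappa_2(D)^2$ produces the $3\lambda^2 + 6\lambda\alpha\beta(1+\beta) + 3\alpha^2\beta^2(1+\beta)^2$ pieces that, combined with $\kappa_4(N)$, rearrange into the displayed form).

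The main obstacle is purely bookkeeping rather than conceptual: differentiating $-\alpha \log(1-\beta(e^t-1))$ four or more times and then grouping the polynomial in $\beta$ so that the factor $(1+\beta)$ is made manifest. Faà di Bruno's formula (or iterated application of the chain rule on $\log f$) keeps this organized, and the verification that $1+7\beta+12\beta^2+6\beta^3 = (1+\beta)(1+6\beta+6\beta^2)$ is the kind of factorization that must be tracked to land on the published form. Beyond $\mu_4$, the same recipe extends mechanically, since $\mu_n$ is a universal polynomial in $\kappa_2,\dots,\kappa_n$, and all of the ingredients are already in hand from the cumulant computation.
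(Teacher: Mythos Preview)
Your argument is correct and, for the probability mass function, is exactly the paper's route: factor $mgf(D)$ as the product of the Poisson and Negative Binomial mgfs, identify $D$ as the sum of independent copies, and read off the pmf as a convolution. The paper simply states this and leaves the binomial-series expansion implicit, whereas you spell it out.

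For the central moments the paper says only that they are ``directly calculated from the moment generating function,'' i.e.\ repeated differentiation of $\phi(t)$ at $t=0$ followed by subtracting powers of the mean. Your cumulant approach is a genuinely tidier variant: passing to $K_D(t)=\log\phi(t)$ makes the Poisson and Negative Binomial contributions additive, so each $\kappa_n$ is just $\lambda$ plus a single Negative Binomial term, and the conversion $\mu_2=\kappa_2$, $\mu_3=\kappa_3$, $\mu_4=\kappa_4+3\kappa_2^2$ then assembles the displayed formulas with far less cross-term bookkeeping than differentiating $\phi$ itself. The trade-off is that you need the cumulant-to-central-moment identities on hand, but for $\mu_4$ and beyond this is a clear win in organization over the paper's brute-force differentiation.
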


\begin{proof}
The probability mass function can be calculated from $mgf(D).$ \\ \\
The moment generating function for Poisson:
\begin{align*}
mgf(P) &= \sum_i \dfrac{e^{-\lambda}\lambda^i}{i!}e^{ti} \\
&= e^{-\lambda}  \sum_i \dfrac{(\lambda e^t)^i}{i!} \\
&= e^{\lambda(e^t-1)} .  
\end{align*}
 
\noindent The moment generating function for Negative Binomial:
\begin{align*}
mgf(NB) &= \sum_i \dfrac{\Gamma(\alpha+i)}{\Gamma(\alpha)\cdot i!} 
p^i (1-p)^{\alpha} e^{ti},   \;\ \;\ \mbox{ where } p= \frac{\beta}{1+\beta}\\
&=  \frac{(1-p)^{\alpha}}{[1-pe^t]^{\alpha}} \\
&= \frac{1}{[1+\beta-\beta e^t]^{\alpha}} \\  
&= \frac{1}{[1-\beta (e^t-1)]^{\alpha}} .
\end{align*}

\noindent Hence the $mgf(D)$ is the product of $mgf(P)$ and $mgf(NB).$ 
The probability mass function of $D$ is the convolution of $Pr(P)$ and $Pr(NB).$ \\ \\
The moments in the second part are directly calculated from the
moment generating function.
\end{proof}

\section{Asymptotic/Non-asymptotic fit with Delaporte distribution (?)}

We will discuss the Delaporte distribution 
as the fit of $X$ in three scenarios:
$k\to \infty$ for ``big $n$'', $k\to \infty$ for ``small $n$'' 
and  small $k$.

\subsection{Delaporte Fit as $k\to \infty$ for ``big $n$''}

In this section, we assume $n \geq \dfrac{2k}{e} \cdot 2^{\frac{k}{2}}.$
We already knows that $X$, 
the number of mono. complete subgraphs,
is normally distributed. 
We try to fit the normal distribution of $X$ 
to Delaporte distribution.
We will solve for values of $\lambda, \alpha$ 
and $\beta$ in terms of $n$ and $k$, then calculate 
the moment about the mean of Delaporte distribution
that arises from the parameters $\lambda, \alpha$ and $\beta$ 
to see if it fits. First we solve the parameters 
by matching the leading terms in proposition \ref{Del}
and theorem \ref{MM}:
\begin{align*}
\lambda+\alpha\beta &= \dfrac{1}{k!} \cdot\dfrac{n^k}{2^{\binom{k}{2}-1}},  \\
\alpha\beta^2 &=  \dfrac{1}{2(k-3)!^2}\dfrac{n^{2k-3}}{2^{2\binom{k}{2}-2}},\\
2\alpha\beta^3 &=  \dfrac{1}{(k-3)!^3} \dfrac{n^{3k-5}}{2^{3\binom{k}{2}-3}}.   
\end{align*}
Then we have:
\[ \beta = \dfrac{n^{k-2}}{2^{\binom{k}{2}-1}} \cdot \dfrac{1}{(k-3)!} , \;\ \;\
\alpha = \dfrac{n}{2} ,\] 
\[ \lambda = E[X]-\alpha\beta = \dfrac{n^k}{k! \cdot 2^{\binom{k}{2}-1}}
\left[1-\dfrac{k(k-1)(k-2)}{2n}\right]. \]

\begin{rem}
Assume $n \sim \dfrac{2k}{e} \cdot 2^{\frac{k}{2}},$ we have
\[   \alpha = \dfrac{n}{2} =\dfrac{k}{\sqrt{2}e} \cdot 2^{\frac{k}{2}}, 
\;\  \;\ \beta = \left(\dfrac{k}{e}\right)^{k-2} \cdot \dfrac{2^{\frac{k}{2}}}{4} \cdot
\dfrac{2}{(k-3)!} = \dfrac{k}{e} \cdot \dfrac{2^{\frac{k}{2}}}{2} \cdot
\dfrac{e^3}{\sqrt{2\pi k}} = 
\dfrac{e^2}{2\sqrt{2\pi}} \cdot \sqrt{k} \cdot 2^{\frac{k}{2}} ,\]
 \[  \lambda = \dfrac{2}{\sqrt{2\pi k}}2^{\frac{k}{2}}. \]   

\noindent We see that, with this assumption, $\alpha \gg \beta \gg \lambda,$
as $k \to \infty.$ This verifies the leading terms that we assume earlier.
\end{rem}
With this setting of $\alpha,\beta$ and $\lambda$, the 
Delaporte distribution (proposition \ref{Del}) 
indeed approaches the normal distribution, i.e. the leading terms 
(assume $\alpha \gg \beta \gg \lambda$) of the moments are
\begin{align*}
E[(X-\mu)^2] &\sim \alpha\beta^2 \\
E[(X-\mu)^3] &\sim 2\alpha\beta^3 \\
E[(X-\mu)^4] &\sim 3\alpha^2\beta^4 \\
E[(X-\mu)^5] &\sim 20\alpha^2\beta^5 \\
E[(X-\mu)^6] &\sim 15\alpha^3\beta^6 \\
E[(X-\mu)^7] &\sim 210\alpha^3\beta^7 \\
E[(X-\mu)^8] &\sim 105\alpha^4\beta^8 \\
E[(X-\mu)^9] &\sim 2520\alpha^4\beta^9 \\
E[(X-\mu)^{10}] &\sim 945\alpha^5\beta^{10} \\
\hdots
\end{align*}
We note that the coefficient of $(2k+3)^{th}$ moment is $\dfrac{(2k+3)!}{3(k!)2^k}.$


\subsection{Delaporte Fit as $k\to \infty$ for ``small $n$''}

In \cite{Poisson}, it was shown that $X \sim$ Poisson 
with $\lambda = E[X]= \dfrac{\binom{n}{k}}{2^{\binom{k}{2}}-1}$
under the condition that \[n \leq \dfrac{\sqrt{2}}{e}k2^{k/2}(1+o(1)).\]
In theorem 3 of \cite{Aaron}, the authors claimed, 
under different condition on $n$, that Delaporte distribution
approaches Poisson. The idea is very interesting but 
the statement is confusing (at least to me). 
Here I write my own version of
this theorem using 
the condition on $n$ similar to \cite{Poisson}.

\begin{prop} \label{DP}
If $D \sim Delaporte(\lambda, \alpha, \beta)$, and $P \sim Poisson(\lambda+\alpha\beta)$,
then $mgf(D) \to mgf(P)$ under the assumption: 
\[ \alpha\beta^2 \to 0.\]
\end{prop}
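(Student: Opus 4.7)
The plan is to work at the level of logarithms of moment generating functions and exploit the Taylor expansion of $-\log(1-u)$ around $u=0$. Using the product form of the Delaporte mgf derived in the proof of Proposition~\ref{Del}, I would begin by writing
\[
\log \mathrm{mgf}(D) \;=\; \lambda(e^t-1) - \alpha\log\bigl(1-\beta(e^t-1)\bigr)
\quad\text{and}\quad
\log \mathrm{mgf}(P) \;=\; (\lambda+\alpha\beta)(e^t-1),
\]
so that it suffices to show
\(-\alpha\log(1-\beta(e^t-1)) - \alpha\beta(e^t-1) \to 0\)
for each fixed $t$ in a neighborhood of $0$.

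For $t$ chosen small enough that $|\beta(e^t-1)|<1$ in the asymptotic regime (say $|\beta(e^t-1)|\le 1/2$), I would apply the identity $-\log(1-u)=\sum_{k\ge 1}u^k/k$ with $u=\beta(e^t-1)$. The $k=1$ term is precisely $\alpha\beta(e^t-1)$, which cancels the negative-binomial contribution to the Poisson mean and leaves
\[
\log\mathrm{mgf}(D) - \log\mathrm{mgf}(P) \;=\; \sum_{k\ge 2}\frac{\alpha\beta^{k}(e^t-1)^{k}}{k} \;=\; \alpha\beta^{2}(e^t-1)^{2}\sum_{k\ge 0}\frac{\bigl(\beta(e^t-1)\bigr)^{k}}{k+2}.
\]
Under the chosen restriction on $t$, the tail sum on the right is bounded by the geometric series $\sum_{k\ge 0}2^{-k}=2$, so the whole expression is $O\bigl(\alpha\beta^{2}(e^t-1)^{2}\bigr)$. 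The hypothesis $\alpha\beta^{2}\to 0$ then drives this to zero, and continuity of $\exp$ upgrades the log-level convergence to $\mathrm{mgf}(D)\to\mathrm{mgf}(P)$ pointwise near the origin, which is what is needed.

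The main technical point (and the only real obstacle) is keeping $|\beta(e^t-1)|<1$ throughout the limit so that the Taylor expansion is legitimate. The bare hypothesis $\alpha\beta^{2}\to 0$ does not, by itself, force $\beta$ to stay bounded; one therefore wants either an implicit side assumption (such as $\beta$ bounded, or $\beta\to 0$, which is what happens in the intended ``small $n$'' regime where $\alpha,\beta,\lambda$ are all small) or an adaptive choice of $t$-neighborhood that shrinks with $\beta$. Once this domain issue is settled, the remainder of the argument is a one-line Taylor estimate.
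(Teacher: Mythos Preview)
Your argument is correct and is in fact cleaner than the paper's. The paper does not work with $\log\mathrm{mgf}$ at all; instead it matches the central moments of $D$ from Proposition~\ref{Del} against those of Poisson one at a time, observing that $E_D[X]=\lambda+\alpha\beta$ and $\mathrm{Var}_D(X)=\lambda+\alpha\beta+\alpha\beta^2$ agree precisely when $\alpha\beta^2\to 0$, and then asserting without further detail that the higher central moments match under the same hypothesis. Your Taylor expansion of $-\alpha\log(1-\beta(e^t-1))$ does the same cancellation globally in one stroke and produces an explicit $O(\alpha\beta^2)$ remainder, which is a strictly sharper statement. You are also right to flag the domain issue: the bare hypothesis $\alpha\beta^2\to 0$ does not control $\beta$, and the paper's moment-matching argument has the same hidden gap (the third central moment contains a $2\alpha\beta^3$ term, which need not vanish if $\beta\to\infty$). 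In the intended application (the ``small $n$'' regime of the following theorem) one has $\beta\to 0$, so the issue is moot there; but as a freestanding proposition both proofs implicitly need $\beta$ bounded, and you are the only one who says so.
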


\begin{proof}
We match the term of each moment of Delaporte distribution in proposition \ref{Del}
with the asymptotic distribution of Poisson. Since in Poisson $E[X]=Var(X),$ 
therefore
\[  E_D[X] = \lambda + \alpha\beta \]
and 
\[ Var_D(X) = \lambda + \alpha\beta +\alpha\beta^2\]
must equal. This results to the condition that $\alpha\beta^2 \to 0$. 
Under this condition, the other higher moments of Delaporte fit 
the moments of Poisson perfectly as well.
\end{proof}

\begin{thm}
Given that $n \leq \dfrac{1}{e}k2^{k/2}(1+o(1)).$
Letting
\[ \beta = \dfrac{n^{k-2}}{2^{\binom{k}{2}-1}} \cdot \dfrac{1}{(k-3)!} , \;\ \;\
\alpha = \dfrac{n}{2} ,\] 
\[ \lambda = E[X]-\alpha\beta = \dfrac{n^k}{k! \cdot 2^{\binom{k}{2}-1}}
\left[1-\dfrac{k(k-1)(k-2)}{2n}\right]. \]
If $D \sim Delaporte(\lambda, \alpha, \beta)$, and $P \sim Poisson(\lambda+\alpha\beta)$,
then $mgf(D) \to mgf(P)$ as $k \to \infty.$
\end{thm}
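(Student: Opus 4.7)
The strategy is to invoke Proposition~\ref{DP}, which collapses the whole convergence $mgf(D) \to mgf(P)$ onto the single asymptotic condition $\alpha\beta^2 \to 0$ as $k \to \infty$. So the theorem reduces to verifying this one limit for the specific $\alpha$ and $\beta$ prescribed in the statement.

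First I would substitute $\alpha = n/2$ and $\beta = n^{k-2}/(2^{\binom{k}{2}-1}(k-3)!)$, obtaining the clean identity
\[ \alpha\beta^2 = \frac{n^{2k-3}}{2^{\,2\binom{k}{2}-1}\,(k-3)!^2}. \]
Comparing with Theorem~\ref{MM}, this is precisely the leading term of $Var(X)$. The prescribed parameters are therefore \emph{designed} to match the variance of $X$, and the question becomes: does this leading variance term vanish under $n \leq \tfrac{k}{e}\,2^{k/2}(1+o(1))$?

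Next I would plug this upper bound on $n$ into the numerator and apply Stirling to $(k-3)!$ in the denominator. The powers of $2$ collapse cleanly: the numerator contributes $2^{k(2k-3)/2} = 2^{k^2-3k/2}$ while $2^{\,2\binom{k}{2}-1} = 2^{k^2-k-1}$, leaving a single factor $2^{-k/2+1}$. The remaining ratio $(k/e)^{2k-3}/((k-3)/e)^{2k-6}$ simplifies via $(1+3/(k-3))^{2k-6} \to e^6$, which together with the Stirling factor $\sqrt{2\pi(k-3)}$ in the denominator yields a prefactor of order $k^2$. Assembling everything gives $\alpha\beta^2 = O(k^2 \cdot 2^{-k/2}) \to 0$, as required.

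The main potential obstacle is simply keeping the Stirling bookkeeping straight, in particular checking that no polynomial-in-$k$ factors conspire to overcome the $2^{-k/2}$ decay. But the exponential dominates by a wide margin, so the hypothesis $n \leq \tfrac{k}{e}\,2^{k/2}(1+o(1))$ has noticeable slack; sharpening the constant (for instance to match the $\sqrt{2}/e$ bound used in \cite{Poisson}) would only refine the arithmetic without changing the structure of the argument.
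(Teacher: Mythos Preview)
Your proposal is correct and follows exactly the paper's own argument: reduce to Proposition~\ref{DP}, compute $\alpha\beta^2 = n^{2k-3}/(2^{2\binom{k}{2}-1}(k-3)!^2)$, then substitute the bound on $n$ and apply Stirling to obtain $\alpha\beta^2 \leq \dfrac{e^3}{\pi}\cdot\dfrac{k^2}{2^{k/2}} \to 0$. Your Stirling bookkeeping (yielding $O(k^2\cdot 2^{-k/2})$) matches the paper's bound, and your side remark identifying $\alpha\beta^2$ with the leading variance term from Theorem~\ref{MM} is precisely how these parameters were chosen in Section~5.1.
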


\begin{proof}
For consistence, we define $\beta, \alpha$ and 
$\lambda$ as in the subsection for ``big $n$''. Then
\[  \alpha\beta^2= \dfrac{1}{2(k-3)!^2}\dfrac{n^{2k-3}}{2^{2\binom{k}{2}-2}} 
\leq  \dfrac{e^3}{\pi}\cdot \dfrac{k^2}{2^{k/2}} 
\rightarrow 0 \;\ \;\ \text{ as } k \rightarrow \infty.   \]

We then apply proposition \ref{DP} to conclude the result.
\end{proof}

\subsection{Delaporte fit for small $k$}

In \cite{Aaron}, Robertson successfully fitted the Delaporte($\alpha, \beta, \lambda$)
to the distribution of $X$ (obtained by simulation) for $k= 4,5$ with various $n$.
Parameters $\alpha, \beta, \lambda$ were solved for each $k$ specifically.
The method of moments for a good fit with $\alpha, \beta, \lambda$
does not work well. We could not find the values of these variables
that was mentioned in Robertson's paper.  
There might not be a general methodology for the 
random variable $X$ (for ``small $k$'') to fit to this Delaporte distribution.

\subsection{Conclusion}
The method of moments verifies that, asymptotically,
Delaporte distribution is a good fit for a random variable 
$X$ for both big $n$ and small $n$ cases.


\section*{Appendix: Bonferroni's Inequality}
We discuss Bonferroni's Inequality and its application to
our Poisson and Delaporte distributions.  

The calculations of Bonferroni help us to understand moment calculus better.

\begin{defi}[Moment Generating Function] \hfill \\
\[   G_X(z) = \sum_{i=0}^{\infty} P(X=i)z^i.\]
\end{defi}

\begin{thm}[Inclusion-Exclusion Principle]
\[ P(X=0) = E\left[\binom{X}{0}\right]-E\left[\binom{X}{1}\right]+E\left[\binom{X}{2}\right]-\cdots .  \]
\end{thm}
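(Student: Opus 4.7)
The plan is to obtain the identity as a single-point evaluation of the generating function $G_X$ defined just above the theorem. The crucial observation is that $G_X(0) = \sum_{i \ge 0} P(X=i)\cdot 0^i = P(X=0)$, so the entire inclusion-exclusion formula amounts to expanding $G_X$ around the point $z=1$ and then setting $z=0$. This change of basepoint is natural because binomial coefficients $\binom{X}{j}$ arise precisely from the binomial expansion of $(1+u)^X$.

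Concretely, I would substitute $z = 1+u$ into the definition, apply the binomial theorem to $(1+u)^i$ for each $i$, and then swap the order of summation:
\[
G_X(1+u) \;=\; \sum_{i \ge 0} P(X=i)\,(1+u)^i
\;=\; \sum_{i \ge 0} P(X=i) \sum_{j=0}^{i} \binom{i}{j} u^j
\;=\; \sum_{j \ge 0} u^j \sum_{i \ge j} \binom{i}{j} P(X=i).
\]
The inner sum on the right is exactly $E\bigl[\binom{X}{j}\bigr]$ (with the convention $\binom{i}{j}=0$ for $i<j$), so $G_X(1+u) = \sum_{j \ge 0} u^j\, E\bigl[\binom{X}{j}\bigr]$. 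Setting $u=-1$ (equivalently $z=0$) then gives
\[
P(X=0) \;=\; G_X(0) \;=\; \sum_{j \ge 0} (-1)^j\, E\!\left[\binom{X}{j}\right],
\]
which is the stated identity once one writes out the first few terms.

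The main technical obstacle is justifying the interchange of the two infinite sums when $u=-1$, since the resulting series is alternating rather than absolutely dominated. For the random variables central to this paper (counts of monochromatic $K_k$ in a two-colored $K_n$), $X$ is bounded above by $\binom{n}{k}$, so both sums are finite and the interchange is immediate. In full generality one can appeal to Abel's theorem on the boundary behavior of a power series, or simply assume that the factorial-moment series converges absolutely at $u=-1$, to legitimize the substitution. I do not expect any deeper difficulty, since the identity is really a rearrangement of a single power series expansion rather than a genuinely probabilistic statement.
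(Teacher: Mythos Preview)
Your proposal is correct and follows essentially the same approach as the paper: expand $G_X$ about the point $z=1$ (the paper phrases this as a Taylor expansion, you via the substitution $z=1+u$ and the binomial theorem) and then evaluate at $z=0$ to recover $P(X=0)$. Your added remarks on justifying the interchange of summations go slightly beyond what the paper itself provides.
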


\begin{proof}
Consider Taylor series expansion about $z=1,$
\[  f(z) = f(1) + \dfrac{f'(1)(z-1)}{1!}+ \dfrac{f''(1)(z-1)^2}{2!}+\dfrac{f'''(1)(z-1)^3}{3!}+\cdots .\]

\noindent Then the moment generating function at $z=0$ becomes 
\[  G_X(0) = G_X(1) - \dfrac{G_X'(1)}{1!}+ \dfrac{G_X''(1)}{2!}-\dfrac{G_X'''(1)}{3!}+\cdots .\]
which implies the statement of theorem.
\end{proof}

\noindent It is still not simple to apply this theorem for $P(X=0).$ 
We might not need exact formula anyway. We only want to use
Bonferroni's inequality to improve the lower bounds. 

\begin{cor}[Bonferroni's inequality:] \hfill \\
For any odd $m$,
\begin{equation} \label{bon}
 P(X=0) \geq \sum_{s = 0}^m(-1)^sE\left[\binom{X}{s}\right], 
\end{equation}

For any even $m$,
\[ P(X=0) \leq \sum_{s = 0}^m(-1)^sE\left[\binom{X}{s}\right]. \]
\end{cor}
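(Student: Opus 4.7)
The plan is to prove the two inequalities together by reducing them to a single pointwise identity for each value of $X$, then taking expectations. The key observation is that the full inclusion-exclusion theorem just stated tells us the infinite alternating sum equals $P(X=0)$; Bonferroni's inequality should be the statement that truncating this series at step $m$ errs on a definite side depending on the parity of $m$. So I want to understand the sign of the truncation error pointwise.

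First I would establish the binomial identity
\[ \sum_{s=0}^m (-1)^s \binom{n}{s} = (-1)^m \binom{n-1}{m} \quad \text{for } n \geq 1, \]
and separately observe that for $n = 0$ the sum collapses to $1$ (only the $s=0$ term is nonzero). The identity for $n \geq 1$ is a standard telescoping computation: using Pascal's rule $\binom{n}{s} = \binom{n-1}{s} + \binom{n-1}{s-1}$ and reindexing, the two resulting partial sums cancel except for the $s=m$ tail $(-1)^m \binom{n-1}{m}$. Either a short induction on $m$ or this direct telescoping would do the job cleanly.

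Next I would take the expectation of both sides of this identity, pulling the finite sum outside:
\[ \sum_{s=0}^{m}(-1)^{s} E\!\left[\binom{X}{s}\right] = E\!\left[\sum_{s=0}^{m}(-1)^{s}\binom{X}{s}\right] = P(X=0) + (-1)^{m}\sum_{n=1}^{\infty} \binom{n-1}{m} P(X=n). \]
Now the tail sum on the right is manifestly nonnegative (each $\binom{n-1}{m} \geq 0$ and $P(X=n) \geq 0$), so the entire truncation error has sign $(-1)^m$. For odd $m$ the error term is $\leq 0$, giving $\sum_{s=0}^m (-1)^s E[\binom{X}{s}] \leq P(X=0)$, which is the lower bound claimed; for even $m$ the error term is $\geq 0$, giving the matching upper bound.

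I do not expect any serious obstacle here; the only place one must be slightly careful is the separate treatment of the $n=0$ contribution, since the formula $(-1)^m\binom{n-1}{m}$ is being evaluated at $n-1 = -1$ and one should interpret or simply exclude that case to match the clean identity. Otherwise the argument is entirely formal, and it has the satisfying feature that the sign of the next omitted term in the inclusion-exclusion expansion controls the direction of the inequality, exactly as the Taylor-series heuristic in the proof of the preceding theorem suggests.
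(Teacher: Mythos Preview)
Your argument is correct. The paper in fact states this corollary without proof, treating it as immediate from the Taylor expansion of $G_X(z)$ about $z=1$ in the preceding theorem; your pointwise approach via the identity $\sum_{s=0}^m(-1)^s\binom{n}{s}=(-1)^m\binom{n-1}{m}$ supplies the missing details and is the standard elementary proof. It has the nice feature of making the truncation error explicit as $(-1)^m\sum_{n\ge 1}\binom{n-1}{m}P(X=n)=(-1)^m E\!\left[\binom{X-1}{m}\mathbf{1}_{\{X\ge 1\}}\right]$, from which the sign is transparent. The only tacit assumption is that the binomial moments $E\!\left[\binom{X}{s}\right]$ for $s\le m$ are finite, but that is already implicit in the statement of the inequality.
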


Erd\"{o}s used $m =1$ to get the lower bound
$\displaystyle \lim_{k \rightarrow \infty}R(k,k)^{\frac{1}{k}} \geq \sqrt{2}$, i.e.
\[  1-E[X] > 0 \;\  \rightarrow  \;\  P(X=0) > 0. \]

However \eqref{bon} with $m =3,5$ and our moment that we calculated
earlier do not improve the lower bounds of $R(k,k)$ at all.
\[  1-E[X]+E\left[\binom{X}{2}\right]-E\left[\binom{X}{3}\right] > 0 \;\  \rightarrow  \;\  P[X=0] > 0 \]
\[  1-E[X]+E\left[\binom{X}{2}\right]-E\left[\binom{X}{3}\right] 
+E\left[\binom{X}{4}\right]-E\left[\binom{X}{5}\right] > 0 \;\  \rightarrow  \;\  P[X=0] > 0. \]


\subsection*{I: Poisson Paradigm}

Recall the probability mass function of Poisson distribution: 
\[ P(X=j) = \dfrac{\lambda^je^{-\lambda}}{j!} \;\ \;\  \mbox{ for } j \geq 0.\]
\textbf{Crash course in probability} 

Moment generating function:
\[  \phi(t) = \sum_{j=0}^{\infty} P(X=j) t^j 
=\sum_{j=0}^{\infty}  \dfrac{\lambda^je^{-\lambda}}{j!}t^j 
=e^{(t-1)\lambda}\]
and
\[  E[(X)_m] =  \sum_{j=0}^{\infty} P(X=j)(j)_m
= \left. \dfrac{d^m \phi(t)}{dz ^m}\right|_{t=1} = \lambda^m.\]

\noindent We can also verify the inclusion-exclusion principle:

\[ \sum_{s=0}^{\infty} (-1)^sE\left[\binom{X}{s}\right] =
  \sum_{s=0}^{\infty} \dfrac{(-1)^s\lambda^s}{s!} = e^{-\lambda} = P(X=0) .    \]

\noindent Exponential moment generating function:
\[  M_X(t) = E[e^{tX}] = \sum_{j=0}^{\infty} P(X=j) e^{tj} 
=\sum_{j=0}^{\infty}  \dfrac{\lambda^je^{-\lambda}}{j!}e^{tj} 
=e^{\lambda(e^t-1)}\] 
and
\[  E[X^m] =  \sum_{j=0}^{\infty} P(X=j)j^m
= \left. \dfrac{d^m M_X(t)}{{dt}^m}\right|_{t=0} = \sum_{k=0}^m 
S(m,k) \lambda^k,  \]
where $S(m,k)$  is Stirling numbers of the second kind.

\textbf{Crash course on Stirling number}
 
Matrix $s(n,k)$, Stirling number of the first kind, 
and matrix $S(n,k)$, Stirling number of the second kind, 
are inverse of each other. 

Stirling numbers of the first and second kind are dual pair, i.e. 
\[ a_n = \sum_{k=0}^n s(n,k)b_k  \iff b_n = \sum_{k=0}^n S(n,k)a_k. \]

Two examples of these important identity are
\[   (x)_n = \sum_{k=0}^n s(n,k)x^k  \iff  x^n = \sum_{k=0}^n S(n,k)(x)_k ,   \]
and
\[  E[X^m] = \sum_{k=0}^m S(m,k)\lambda^k
\iff \lambda^m = \sum_{k=0}^m s(m,k)E[X^k] = E[(X)_m]   .   \]
As mentioned earlier, this Poisson case 
only valid for ``small $n$''. Therefore it
does not improve the lower bound of $R(k,k).$

\subsection*{II: Delaporte Paradigm}
We verify Bonferroni's Inequality with Delaporte distribution,
``big $n$'' case, that we have done before.

Assume the size of $n \sim \dfrac{2k}{e}2^{\frac{k}{2}}$. We also let
\[ \lambda \sim \dfrac{2^{\frac{k}{2}}}{\sqrt{k}}, \;\   
\alpha \sim k2^{\frac{k}{2}}, \;\ 
\beta \sim \sqrt{k}2^{\frac{k}{2}}. \]
For  each $E[(X)_s]$, \\ 
The first term is $\lambda^s$. \\  
The second term is $s\alpha\beta\lambda^{s-1}$. \\
The third term is $\binom{s}{2}\alpha\beta^2(\alpha+1)\lambda^{s-2}$. \\
The fourth term is $\binom{s}{3}\alpha\beta^3(\alpha+1)(\alpha+2)\lambda^{s-3}$. \\
The fifth term is $\binom{s}{4}\alpha\beta^4(\alpha+1)(\alpha+2)(\alpha+3)\lambda^{s-4}$. \\
$\cdots$ 

Therefore, 
\begin{align*}
 P(X=0) &= \sum_{s=0}^{\infty} (-1)^sE\left[\binom{X}{s}\right]  \\
 &=\sum_{s=0}^{\infty} \dfrac{(-1)^s(\lambda^s+  s\alpha\beta\lambda^{s-1}
 + \binom{s}{2}\alpha\beta^2(\alpha+1)\lambda^{s-2}
 + \binom{s}{3}\alpha\beta^3(\alpha+1)(\alpha+2)\lambda^{s-3}  +\dots )}{s!} \\
 &=\sum_{s=0}^{\infty} (-1)^s(   \dfrac{\lambda^s}{s!}
 +  \dfrac{\alpha\beta}{1!}\dfrac{\lambda^{s-1}}{(s-1)!}
 + \dfrac{\alpha\beta^2(\alpha+1)}{2!}\dfrac{\lambda^{s-2}}{(s-2)!}
 + \dfrac{\alpha\beta^3(\alpha+1)(\alpha+2)}{3!}\dfrac{\lambda^{s-3}}{(s-3)!}  +\dots ) \\
 &= (1 -\alpha\beta + \dfrac{\alpha\beta^2(\alpha+1)}{2!} 
 - \dfrac{\alpha\beta^3(\alpha+1)(\alpha+2)}{3!}
 + \dfrac{\alpha\beta^4(\alpha+1)(\alpha+2)(\alpha+3)}{4!} -\dots
 ) \cdot e^{-\lambda} \\
 &= \dfrac{e^{-\lambda}}{(1+\beta)^{\alpha}} \to 0 ,    
\end{align*}

which agrees which proposition \ref{Del} and is the result we expect.

\end{document}